\documentclass[11pt, letterpaper]{amsart}
\usepackage[left=1in,right=1in,bottom=1in,top=1in]{geometry}
\usepackage{graphicx}
\usepackage{amsmath,amsthm,amssymb}
\usepackage{mathrsfs}
\usepackage[all,cmtip]{xy}
\usepackage{setspace}
\usepackage{color}
\usepackage[usenames,dvipsnames,svgnames,table]{xcolor}
\usepackage{url}
\usepackage{multirow}
\usepackage{subcaption}
\usepackage[numbers]{natbib}

\theoremstyle{plain}

\newcommand{\bp}{[\#\mbox{bp}]}

\newcommand{\ENJ}{\mathcal{E}_{\mbox{\scriptsize NJ}}}

\newcommand{\Var}{\mbox{Var\,}}
\newtheorem{lem}{Lemma}
\newtheorem{thm}[lem]{Theorem}

\begin{document}

\title{Deconvolving RNA base pairing signals}

\author{Torin Greenwood        \and Christine E.~Heitsch 
}

\begin{abstract}
A growing number of \textsc{rna} sequences are now known to have distributions of multiple stable sequences.  Recent algorithms use the list of nucleotides in a sequence and auxiliary experimental data to predict such distributions.  Although the algorithms are largely successful in identifying a distribution's constituent structures, it remains challenging to recover their relative weightings.  In this paper, we quantify this issue using a total variation distance.  Then, we prove under a Nussinov-Jacobson model that a large proportion of \textsc{rna} structure pairs cannot be jointly reconstructed with low total variation distance.  Finally, we characterize the uncertainty in predicting conformational ratios by analyzing the amount of information in the auxiliary data.

%

\end{abstract}

\maketitle

\section{Introduction}

\label{intro}

Determining the structural conformations of an \textsc{rna} sequence reveals functional information.  Identifying structures in the lab is difficult, so discrete optimization methods are used instead.  These methods use the list of nucleotides in a sequence to predict two dimensional approximations of structures, called secondary structures, that still encode functional information.  However, an increasing number of \textsc{rna} molecules are now known to fold into multiple stable structures, \cite{Leonard:2013, Spasic:2018}.  For example, approximately 20\% of eukaryotic \textsc{rna} folds into multiple structural conformations \emph{in vivo}, \cite{Lu:2016}.  Differing conformations can also yield multiple biological functions for the same \textsc{rna} sequence, as is the case for riboswitches, \cite{Antunes:2018}.  In response, single-structure prediction methods have been refined to find distributions of \textsc{rna} structures, as reviewed in \cite{Schroeder:2018}.

When searching for single conformations, the single-structure prediction algorithms use the Nearest Neighbor Thermodynamic Model (\textsc{nntm}), \cite{mathews-turner-06, turner-mathews-10}.  The \textsc{nntm} can be used to assign an energy to any structure by considering all of the components of the structure.  Then, one popular approach is to find the minimum free energy (\textsc{mfe}) structure by using dynamic programming algorithms (based on the Zuker algorithm, \cite{Zuker:1981}).  Because the \textsc{nntm} is a model, these \textsc{mfe} structures do not always have perfect accuracy.  Plausible alternatives can be generated by predicting suboptimal structures, \cite{zuker-89, wuchty-etal-99}, and computing base pairing probabilities under a Boltzmann partition function, \cite{mccaskill-90, hofacker-etal-94, mathews-04}.  The space of suboptimal structures can be explored further by converting energies into Boltzmann probabilities and sampling from the corresponding distribution, \cite{Ding:2003}.

More recently, laboratory experiments have been used to direct single-conformation predictions.  Folded sequences are exposed to a chemical, and the chemical reactivity to each nucleotide is measured, \cite{Deigan:2009}.  The reactivity is positively correlated to a nucleotide's unpairedness, but due to a variety of experimental reasons, the signal is noisy.  Several competing methods of converting this noisy data into pseudoenergies have been proposed (\cite{Deigan:2009, Zarringhalam:2012, Washietl:2012}, reviewed in \cite{Eddy:2014}).  The pseudoenergies are then added to the energies from the \textsc{nntm}, shifting predictions according to the data.  As described in \cite{Deigan:2009}, when auxiliary data is included in single-structure predictions, the accuracy of these methods is high enough to recover important structural information about wide classes of \textsc{rna} sequences.

When adapting single-structure prediction algorithms to sequences with multiple conformations, auxiliary experimental data can be used in conjunction with Boltzmann distributions to predict distributions of structures.  A successful prediction must identify a distribution's constituent structures and their relative conformational ratios.  Unfortunately, even when a sequence has multiple conformations, existing prediction models collapse distributions towards one structure when incorporating experimental data (\cite{Spasic:2018}, see examples below in Section \ref{Sec:Examples}).  Rsample is a recent enhancement to these data-directed prediction algorithms designed to identify multimodal conformations, \cite{Spasic:2018}.  Rsample is largely successful in identifying the structural modes in a distribution.  However, identifying the correct conformational ratios remains challenging (see Section \ref{Sec:Examples}).

In this paper, we will give evidence for why it is difficult to reconstruct the conformational ratios in a multimodal structural distribution.  We focus on bimodal structural distributions because they are the simplest form of multimodal distribution.  In Section \ref{Sec:Examples}, we motivate the problem by using existing prediction methods to analyze \textsc{rna} sequences with two known conformations.  Currently, there is not a large collection of experimental data available from multimodal distributions of structures.  Thus, in order to analyze the performance of prediction methods systematically, we use empirically-derived distributions of auxiliary data from \cite{Sukosd:2013}.  With these distributions, we generate simulated data for known structures.   However, for a given \textsc{rna} sequence, the conformational ratio between structures is likely to change depending on a variety of extrinsic factors, including temperature, pH, Mg$^{2+}$ concentration, interactions with other nucleic acids, and ligand or protein binding, \cite{Hobartner:2003}.   Thus, we mix our simulated data sequences in different proportions, allowing us to analyze the behavior of prediction models as the proportion of structures in a distribution varies.  These examples will prompt mathematical definitions useful in pinpointing the problems in reconstructing weightings.

Next, in Section \ref{Sec:Nussinov}, we analyze mathematically the challenges of reconstructing conformational ratios.  Two pivotal factors in using auxiliary data are the way the data is incorporated into predictions, and how much information about the conformational ratios is contained in the auxiliary data.  To separate the factors, we first consider how well prediction models perform when given noiseless auxiliary data.   In order to remove the noise from the auxiliary data, we consider a binary model of data that describes whether each nucleotide in a structure is paired or unpaired, without revealing the complementary nucleotide in a pairing.  Then, we investigate a Nussinov-Jacobson model of assigning energies to structures, \cite{Nussinov:1980}, which is amenable to combinatorial analysis but still captures critical features of the other models.  Despite the Nussinov-Jacobson model's simplicity, its analysis has advanced our understanding of secondary structure prediction in other contexts, \cite{Clote:2005, Clote:2007}.  In Theorem \ref{ProbabilisticLimit}, we find:\\

\fbox{
\parbox{0.9 \linewidth}{
Conclusion 1: As the length of $\textsc{rna}$ sequences increases, almost all pairs of structures have some conformational ratio that cannot reliably be reproduced by a Nussinov-Jacobson energy model.
}
}

\ \\

After having analyzed the energy models directly, we then turn towards analyzing the noisy data in Section \ref{Sec:Information}.   Because the conformational ratio between structures may change depending on experimental factors, the auxiliary data must contain information about the conformational ratio that is not in the underlying thermodynamics of the sequence.  Some of the challenges of interpreting \textsc{shape} reactivities in multimodal distributions are revealed in \cite{Vieweger:2018}.  Motivated by the ability of Rsample to identify structures within a distribution and the ``sample and select'' algorithm from \cite{Quarrier:2010}, we assume that the structures in a distribution are known, but that their conformational ratio $p$ is not.  We then use tools from information theory to quantify how well the ratio can be recovered from the auxiliary data.  We find the following:\\

\fbox{
\parbox{0.9 \linewidth}{
Conclusion 2:  Let $n$ be the number of positions where the nucleotides in two structural modes are not both paired nor both unpaired.  Any unbiased estimator of the conformational ratio $p$ will have a variance that decreases at most linearly in $n$.
}
}

\section{Background and related work} \label{Sec:Background}
Before proceeding, we give some background information on auxiliary data and prediction models.

\subsection{Nearest Neighbor Thermodynamic Model} \label{NNTM}
The \textsc{nntm} has hundreds of parameters, corresponding to all the possible features in an \textsc{rna} structure, \cite{mathews-turner-06, turner-mathews-10}.  Structures are then assigned energy scores by summing the subscores of their components.  Because we will be analyzing multimodal distributions, we will look at the suboptimal structures predicted by this model.   Given a fixed \textsc{rna} sequence, energies can be used to define a Boltzmann distribution of structures: a structure $S$ with energy $\mathcal{E}(S)$ is assigned the probability,
 \begin{equation} \label{BoltzmannProbability}
 \mathbb{P}(S) := \frac{e^{-\mathcal{E}(S)/RT}}{Z},
 \end{equation}
 where  $Z$ is a partition function over all possible structures for the sequence, $R = 0.001987$ kcal/(mol K) is the gas constant, and $T$ is the temperature, which we take to have default value $310$K.  Note that when an \textsc{rna} sequence is predicted to fold into a single structure, a prediction algorithm typically aims to find the structure $S$ which minimizes the energy, or equivalently, maximizes $\mathbb{P}(S)$.  Alternatively, to search the space of suboptimal structures, there are efficient algorithms which can sample structures from the distribution, \cite{Ding:2003}.
 
 \subsection{SHAPE data and pseudoenergies} \label{SHAPE}

Auxiliary data can be used to reweight predicted distributions of structures by adding additional \emph{pseudoenergy} terms to each structure.  The data comes from laboratory experiments where folded \textsc{rna} sequences are exposed to chemicals, and the reactivity of each nucleotide is measured.  The reactivities are called \textsc{shape} data, named after the method, \emph{selective 2'-hydroxyl acylation analyzed by primer extension}, and they are positively correlated with unpairedness.  Different chemicals can be used, and each can give a different structural signal, \cite{Rice:2014}.

Although \textsc{shape} data encodes some information about a nucleotide's pairedness, the signal is noisy due to a variety of experimental and biological factors.  There are several methods for converting this signal into pseudoenergies to shift the Boltzmann distribution, and \cite{Eddy:2014} compares some common \textsc{shape} methods for single-structure prediction.  Below, we use the models from Deigan et al., \cite{Deigan:2009}, Zarringhalam et al., \cite{Zarringhalam:2012}, and Washietl et al., \cite{Washietl:2012}, and we use the implementation in the ViennaRNA software package, \cite{Lorenz:2011}.

Recently, new data-directed approaches aim to probe or reconstruct multimodal distributions.  We focus on Rsample, \cite{Spasic:2018}, which uses a single \textsc{shape} sequence and an advanced pseudoenergy model.  Rsample succeeds in identifying the constituent structures within several known \textsc{rna} structural distributions.  Identifying the correct ratios between structures remains challenging, as we will see in Section \ref{Sec:Examples} below.  Other prediction models include \textsc{Reeffit}, \cite{Cordero:2015}, which uses additional mutate-and-map information to guide predictions.  Finally, Ensemble\textsc{rna}, \cite{Woods:2017}, probes the structural space for potential structural modes without being limited by the exponential decay of Boltzmann probabilities.  All three programs are described in \cite{Schroeder:2018}.

\subsection{Modeling SHAPE data}
Although there is evidence of the existence of multimodal structural distributions, there is not yet a large collection of data derived from known multimodal distributions.  Thus, we model \textsc{shape} data coming from a multimodal distribution with a linear interpolation of \textsc{shape} data from two structures, as described in Section \ref{Sec:Examples}, and in line with \cite{Spasic:2018}.  To model the \textsc{shape} data from a single known structure, we use the empirically-derived distributions developed in \cite{Sukosd:2013}.  Here, the authors used \textsc{shape} data collected in \cite{Deigan:2009} from a 16S and a 23S \emph{Escherichia coli} ribosomal sequence to fit distributions for edge-paired, center-paired, and unpaired nucleotides.  A nucleotide is center-paired if it and both its neighbors belong to the same helix, or if it is paired and adjacent to a single-nucleotide bulge.  Otherwise, a paired nucleotide is edge-paired.

In Section \ref{Sec:Examples}, we use these empirical distributions to simulate data to illustrate the potential problem with reconstructing conformational ratios.  Later, in Section \ref{Sec:Information}, we use the empirical distributions again to analyze how much information \textsc{shape} data contains about the ratios.  In contrast, in Section \ref{Sec:Nussinov}, we use a noiseless binary model of \textsc{shape} data so that we can analyze the prediction models without confounding factors.
 
\subsection{Total variation distance} \label{Sec:TVD}

We choose a method to quantify error when predicting conformational ratios, based on the total variation distance.  The total variation distance is a common distance measure for probability measures, defined as the maximum difference the two measures could assign to any event.  When comparing two Bernoulli distributions with parameters $p$ and $\hat p$, the total variation distance is simply $|p - \hat p|$.

To apply this distance to distributions of \textsc{rna} structures, consider predicting a target bimodal distribution comprised of ratio $p$ of structure $A$ and $(1 - p)$ of structure $B$.  To quantify how well a prediction method can reconstruct this distribution, we define $\hat p$ to be the probability that a structure in the prediction method's Boltzmann distribution is closer to structure $A$ than it is to structure $B$ (by comparing F-measures).  We estimate $\hat p$ by taking a sample of 1000 structures, and then we call the absolute difference between this estimate and $p$ the total variation distance.  Note that the reduction of the Boltzmann distribution to only two classes of structures (those closer to $A$ and those closer to $B$) causes this total variation distance to be less than had we allowed some structures to be classified as a mismatch.  Below, we use this definition to find lower bounds on the error of common prediction methods in predicting $p$.  This error measure can easily be extended to distributions with more than two modes.

\section{Examples} \label{Sec:Examples}
\begin{figure}
\begin{center}
\includegraphics[width=\textwidth]{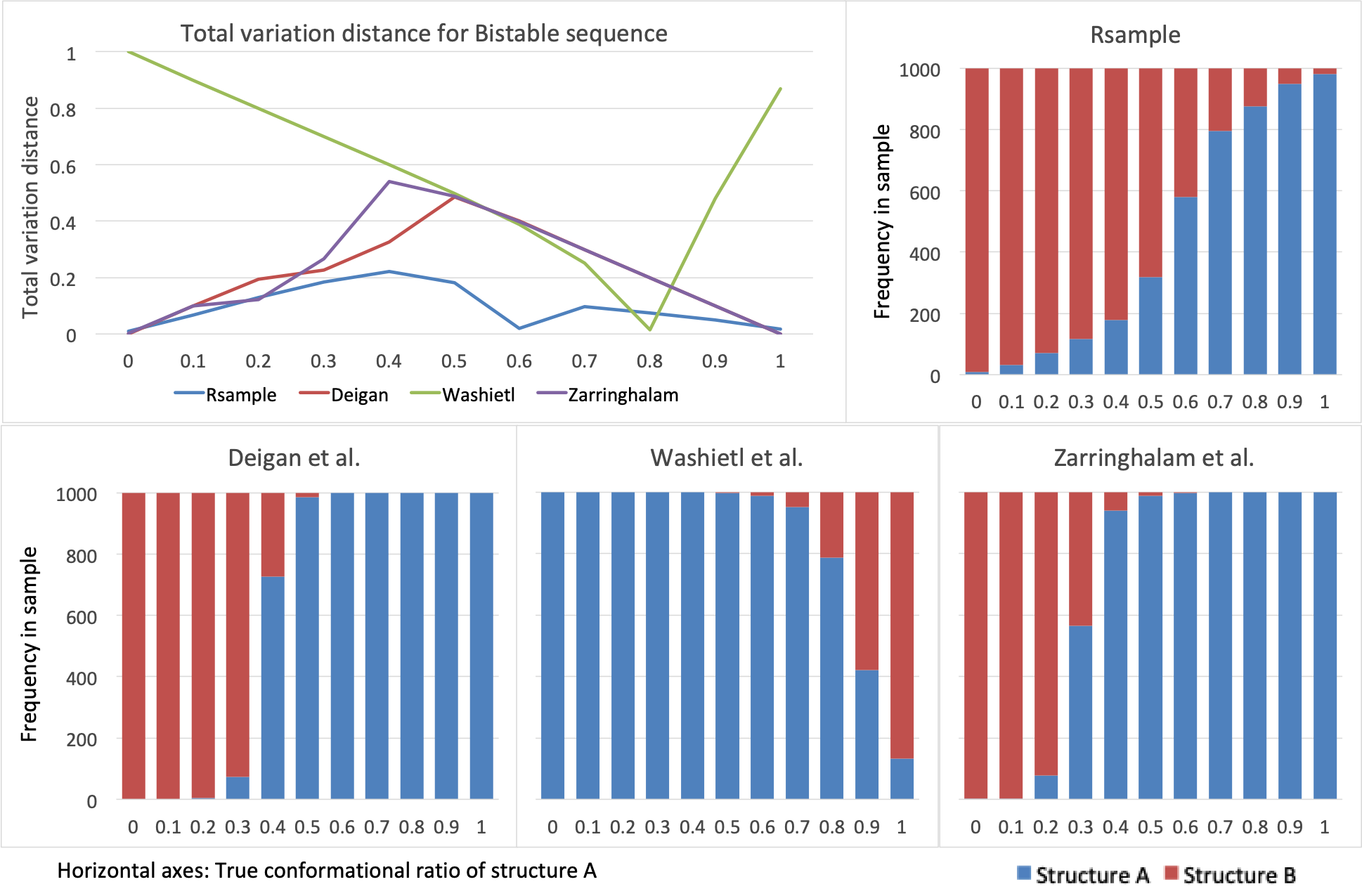}
\end{center}
\caption{Each prediction method generated samples of 1000 Bistable \textsc{rna} structures for each mixed auxiliary data sequence $M(p)$, with $p$ values ranging from $0$ to $1$.  The upper left graph illustrates the error in the conformational ratio, measured using a total variation distance.  
The bar graphs show the breakdown of samples into the two corresponding structures.
}
\label{fig:Bistable}
\hrule
\end{figure}
We begin with the synthetic 25-nucleotide Bistable 1 sequence from \cite{Hobartner:2003}, which is modeled after riboswitches.  In \cite{Spasic:2018}, Rsample was found to accurately reconstruct a bimodal Bistable distribution when given \textsc{shape} data corresponding to a known experimental setting.  However, the conformational ratio is expected to be dependent on external factors, so we will explore the accuracy of the models when the conformational ratio is changed.  To do so, we simulate auxiliary experimental data sequences $\{S_i\}$ and $\{T_i\}$ corresponding to the Bistable structures $A$ and $B$.  Then, we model mixtures of ratio $p$ of structure $A$ and $(1 - p)$ of structure $B$ with the linear interpolation, $M(p) = \{pS_i + (1 - p)T_i\}$.  We input the Bistable \textsc{rna} sequence and $M(p)$ into the prediction models from Deigan et al., \cite{Deigan:2009}, Zarringhalam et al., \cite{Zarringhalam:2012}, Washietl et al., \cite{Washietl:2012}, and Rsample, \cite{Spasic:2018}, for each value of $p$ between 0 and 1, incremented by 0.05.  We sampled 1000 structures from the Boltzmann distribution generated by each model.

Due to the simplicity of the Bistable sequence, over 92\% of the structures in every sample closely matched conformation $A$ or $B$ (with F-measure above $90\%$).  Thus, we instead measure how well the conformational ratio is predicted in Figure \ref{fig:Bistable} by using a total variation distance, as described in Section \ref{Sec:TVD}.  Rsample has the best performance overall, with an average total variation distance of under $0.1$.  Note that every method must get some ratio correct as long as the pseudoenergies they use are a continuous function of the auxiliary data.  Thus, even methods which perform poorly overall will have a small range of $p$ values where they are successful.  Also, the total variation distance is typically small for values of $p$ near $0$ and $1$, confirming that the \textsc{shape} data is able to direct single-conformation predictions.

To gain insight into why the total variation distance is sometimes large, we look at the breakdown of the samples into structures $A$ and $B$ in Figure \ref{fig:Bistable}.  For all but one of the methods, we see that as the weighting of structure $A$ increases in the auxiliary data, the predicted conformational ratio increases in the sample.  But, the total variation distance is high for the Deigan et al. and Zarringhalam et al. methods because structure $A$ dominates over structure $B$ for most $p$ values.

\begin{figure}
\begin{center}
\includegraphics[width=\textwidth]{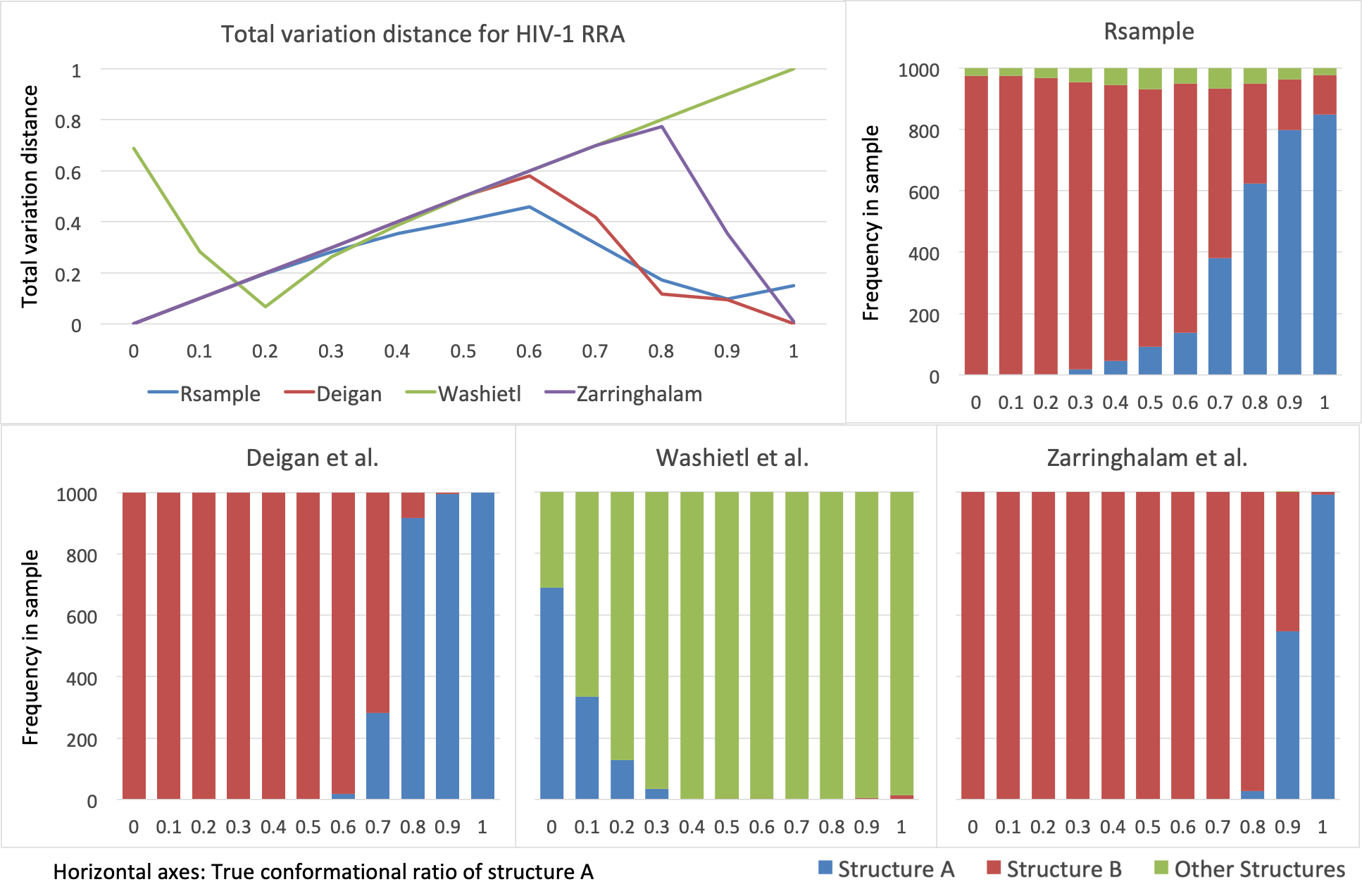}
\end{center}
\caption{We repeated the process from Figure \ref{fig:Bistable} for two \textsc{hiv-1 rra} structures. 
 }
\label{fig:HIV}
\hrule
\end{figure}

Unfortunately, predictions are worse for other sequences. In Figure \ref{fig:HIV}, we ran the same experiment with the \textsc{hiv-1 rra} sequence from the test set in the Rsample publication, \cite{Spasic:2018}.  Here, Rsample consistently overweights one of the two known \textsc{hiv-1 rra} conformations, as the other models did for the Bistable sequence.  However, Rsample still performs the best out of the methods we tested, with an average total variation distance of $0.23$.  Because the \textsc{hiv-1 rra} sequence is more complicated, some samples contained structures whose sets of helices did not match either known conformation, as shown in green in Figure \ref{fig:HIV}.

In this paper, we will show mathematically why it is difficult to use pseudoenergy models to reconstruct conformational ratios.  In Section \ref{Sec:Nussinov}, we will identify conditions in pseudoenergy predictions that can lead to the skewed samples in Figure \ref{fig:Bistable}, and in turn, high total variation distances.  Then, we show that under the Nussinov-Jacobson energy model and a simple random model for \textsc{rna} secondary structures, a large set of \textsc{rna} structures have conformational weightings that are hard to reconstruct.  Finally, in Section \ref{Sec:Information}, we use information theory to show how the ability to reconstruct a conformational ratio depends on the constituent structures in a distribution.

\section{Pseudoenergy models with noiseless auxiliary data} \label{Sec:Nussinov}

Here, we look at a simple energy model for \textsc{rna} structures to illustrate the challenges in reconstructing conformational weightings common to all energy models.  The Nussinov-Jacobson energy model, based on the algorithm from \cite{Nussinov:1980} and analyzed in \cite{Clote:2005, Clote:2007}, assigns energies to structures by totaling the number of base pairs in the structure. Thus, the structure with the most base pairs is considered optimal.  With this in mind, for any structure $A$, define $\bp(A)$ to be the number of paired nucleotides in $A$.  Then, we define an extension to the Nussinov-Jacobson model that incorporates auxiliary data: the energy of structure $A$ given auxiliary data sequence $M = \{M_i\}$ is:

\begin{equation} \label{Nussinov}
\ENJ(A | M) = -\bp(A) + \sum_{i = 1}^n C|x_i - M_i|,
\end{equation}
where $C > 0$ is a model parameter, and $x_i$ is $1$ if the $i$th nucleotide of $A$ is unpaired, or $0$ if it is paired.  The auxiliary data pseudoenergy term is motivated by the Zarringhalam et al. model from \cite{Zarringhalam:2012}.  Here, the auxiliary data is rescaled to the range $[0, 1]$, so that $|x_i - M_i|$ is an agreement score between the structure and the data. Details on our data model are below.

Although the Nussinov-Jacobson model is much simpler than the models used to predict secondary structures, it will still illustrate the problems inherent to other pseudoenergy models.  In particular, the parameter $C$ determines how much the auxiliary data should be weighted relative to the underlying energies assigned to each structure.  The Deigan et al. model, Zarringhalam et al. model, and Rsample have similar parameters.  We will see that when $C$ is small, one structure may dominate over the other structure in a bimodal mixture because the pseudoenergy is not strong enough to overcome differences in thermodynamic energy.  On the other hand, if $C$ is large, the pseudoenergy is large enough that the Boltzmann distribution is collapsed towards a single structure.

In order to analyze the Nussinov-Jacobson model, we make two simplifications.  First, we remove all noise from the auxiliary data.  That is, the auxiliary data sequence corresponding to a structure $A$ will be a sequence of zeros and ones, where a zero represents that the nucleotide is paired, and a one represents that the nucleotide is unpaired.  Then, we will consider selecting approximate structures at random as follows: for each nucleotide in a sequence, we will choose whether it is paired or unpaired independently of all other nucleotides with probability $q \in (0, 1)$.  Choosing $q \approx 0.6$ corresponds to the approximate percentage of paired nucleotides in \textsc{rna} sequences, where choosing $q = 0.5$ would give all structures equal probability.  We will show the following:


\begin{thm} \label{ProbabilisticLimit} Fix $q \in (0, 1)$ and fix $C > 0$ in the Nussinov-Jacobson model.  Consider \textsc{rna} structures where each nucleotide is paired independently with probability $q$.  Let $P_n$ be the probability that two randomly chosen structures of length $n$ have a conformational ratio that the Nussinov-Jacobson model cannot reconstruct with total variation distance less than $0.25$.  Then, $\lim_{n \to \infty} P_n = 1$.
\end{thm}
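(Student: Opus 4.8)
The plan is to exploit the fact that, with noiseless binary data, the Nussinov--Jacobson Boltzmann ensemble factorizes into independent per-nucleotide Bernoulli choices, and then to show that for a conformational ratio bounded away from $1/2$ almost every random pair of structures gets collapsed onto one mode.

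First I would record the structural simplification. Write $a,b\in\{0,1\}^n$ for the paired/unpaired indicator patterns of $A$ and $B$, with $a_i=1$ iff position $i$ is unpaired in $A$; the noiseless data for the mixture of ratio $p$ of $A$ with $(1-p)$ of $B$ is then $M_i(p)=pa_i+(1-p)b_i$. Since $\bp(S)=n-\sum_i s_i$, equation~\eqref{Nussinov} becomes $\ENJ(S\mid M(p))=-n+\sum_{i=1}^n\bigl(s_i+C\,|s_i-M_i(p)|\bigr)$, a sum of per-position terms. Consequently the Boltzmann distribution~\eqref{BoltzmannProbability} over paired/unpaired patterns is a product measure: the positions are independent, and position $i$ is unpaired with probability $\rho_i=\sigma\!\bigl((2CM_i(p)-C-1)/RT\bigr)$, where $\sigma(x)=1/(1+e^{-x})$.

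Next I would isolate the ``discriminating'' positions $D=\{i:a_i\neq b_i\}$, partitioned as $D_A=\{i:a_i=1,b_i=0\}$ and $D_B=\{i:a_i=0,b_i=1\}$; only these tell $A$ and $B$ apart, and a Boltzmann sample $S$ is classified with $A$ exactly when it agrees with $a$ on more than half of $D$. On $D_A$ one has $M_i(p)=p$, hence $\rho_i=U_A:=\sigma((2Cp-C-1)/RT)$; on $D_B$ one has $M_i(p)=1-p$, hence $\rho_i=U_B:=\sigma((C-1-2Cp)/RT)$. Because $\sigma$ is strictly increasing and $C>0$, $U_A>U_B$ precisely when $p>1/2$, and fixing $p^\ast=0.7$ the gap $\delta:=U_A-U_B$ is a fixed positive constant depending only on $C$. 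I would then draw $A,B$ from the $q$-model: a Chernoff bound gives that with probability $1-e^{-\Omega(n)}$ both $|D_A|$ and $|D_B|$ equal $(1+o(1))\,q(1-q)n$. On that event, with $p=p^\ast$, the expected number of positions of $D$ at which $S$ agrees with $a$ is $|D_A|U_A+|D_B|(1-U_B)=\tfrac12|D|+q(1-q)\delta\,n+o(n)$, which exceeds $\tfrac12|D|$ by $\Omega(n)$; since this count is a sum of independent indicators, Hoeffding's inequality places it above $\tfrac12|D|$ except with probability $e^{-\Omega(n)}$. Hence $\hat p(p^\ast)=\mathbb{P}[S\text{ classified with }A]\to1$, so the total variation distance at $p^\ast$ is $|\hat p(p^\ast)-p^\ast|\to0.3>0.25$, and combining with the Chernoff estimate gives $P_n\ge1-e^{-\Omega(n)}\to1$.

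The step I expect to be the main obstacle is making the concentration argument airtight: one must verify that the typical $O(\sqrt{n\log n})$ fluctuation of the agreement count about its mean is genuinely swamped by the $\Omega(n)$ surplus, that the random discrepancy $|D_A|-|D_B|$ does not eat into that surplus, and that the classification event behaves identically whether ``closer to $A$'' is read from Hamming distance on paired/unpaired patterns or from the F-measure comparison of Section~\ref{Sec:TVD} --- in each case it is a monotone majority over $D$ that the product-measure concentration controls. A smaller point worth stating explicitly is that the Boltzmann sum here runs over all paired/unpaired patterns, which is exactly what makes the factorization hold; summing instead over valid secondary structures would destroy the product structure and force one to replace this computation by an estimate of the Nussinov--Jacobson partition function, though I would expect the qualitative conclusion to survive.
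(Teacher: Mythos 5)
Your proposal is essentially correct for the theorem as literally stated, but it takes a genuinely different route from the paper. The paper never commits to a Boltzmann ensemble at all: it works exclusively with the ratio $\mathbb{P}(A\mid M(p))/\mathbb{P}(B\mid M(p))$, through Lemma \ref{NJCrossoverPoint} (the crossover point $p^*=\frac12+\frac1{2C}-\frac{\bp(A-B)}{C\bp(A\Delta B)}$) and Lemma \ref{NJCrossoverWindow} (window length at most $\frac{RT\ln 9}{C\bp(A\Delta B)}$), and then splits on a cutoff $C_n^*=\Theta(1/n)$: for $C\le C_n^*$ a CLT/Berry--Esseen estimate on $\bp(A-B)$ shows the crossover point falls outside $(0.25,0.75)$ with probability tending to one, and for $C\ge C_n^*$ a binomial concentration for $\bp(A\Delta B)$ forces a window of width at most $0.3$; either event produces some ratio with total variation distance at least $0.25$. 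You instead fix the ensemble to be all paired/unpaired patterns in $\{0,1\}^n$, exploit the resulting product measure to compute $\hat p$ directly at the single ratio $p^\ast=0.7$, and show by Chernoff/Hoeffding that the sample collapses onto $A$, so the distance tends to $0.3$. Your route buys a direct, quantitative handle on $\hat p$ itself (the paper's passage from ``structure $B$ has Boltzmann probability below $0.1$'' to the total variation distance is looser than your majority-agreement computation), at the cost of two things the paper's route provides: first, partition-function cancellation makes the paper's inequalities agnostic to the underlying structure space, whereas your factorization genuinely fails if the ensemble is restricted to valid secondary structures, and ``I would expect the qualitative conclusion to survive'' is not a proof; second, because the paper's case split covers every $C$ relative to $C_n^*$, its argument is uniform in $C$ and underlies the finite-$n$ bounds in Figure \ref{fig:Nussinov}, while your gap $\delta=U_A-U_B$ degenerates as $C\to 0$, so your argument handles fixed $C$ only --- sufficient for the stated theorem, but not for the ``regardless of parameterization'' reading the paper uses afterwards. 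The issues you flag (fluctuations of $|D_A|-|D_B|$, Hamming versus F-measure classification) are real but minor in the noiseless binary model; the choice of ensemble is the one substantive caveat to record if you write this up.
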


This indicates that regardless of the parameterization of the Nussinov-Jacobson model, almost all structures of long enough length have ratios that cannot be reconstructed even when the auxiliary data is noiseless.  In Figure \ref{fig:Nussinov} below, we will show that even for small values of $n$, $P_n$ is large.  Although a similar analysis becomes much more complicated for other pseudoenergy models, there are examples of structures for the Deigan et al. and Zarringhalam et al. models where conformational ratios cannot be recovered regardless of the parameters in the models.

To quantify the problems in the Nussinov-Jacobson model when $C$ is large or small, we turn back to the simulated experiments from Section \ref{Sec:Examples}.  We saw in Figure \ref{fig:Bistable} that for Rsample, as $p$ increases, the predicted conformational ratio in the sample increases nearly linearly, which is the target behavior we would hope to see for all multimodal distributions.  However, in the \textsc{hiv-1 rra} samples in Figure \ref{fig:HIV}, the red structure consistently dominates over the blue structure for the majority of $p$ values.  We aim to characterize this problem mathematically.

Consider a general \textsc{rna} sequence with two stable structures $A$ and $B$.  In this section, we model a noiseless data sequence $\{S_i\}$ by defining $S_i = 0$ if the $i$th nucleotide in $A$ is paired, and $1$ if it is unpaired.  We similarly define a binary auxiliary data sequence $\{T_i\}$ for $B$.  Then, as before, we let $M(p) = \{p S_i + (1 - p) T_i\}$ be an auxiliary data sequence corresponding to a mixture of ratio $p$ of structure $A$ and ratio $(1 - p)$ of structure $B$.  We define $\bp(A)$ to be the number of nucleotides paired in $A$ as above, and also $\bp(A - B)$ to be the number of nucleotides paired in $A$ that are not paired in $B$.  Finally, we let $\bp(A \Delta B) = \bp(A - B) + \bp(B - A)$.

We define the \emph{crossover point} for structures $A$ and $B$ to be the value of $p$ where two structures appear with equal probability in the predicted Boltzmann distribution.  In an accurate prediction, the crossover point should be near $p = 0.5$, but for Rsample in the \textsc{hiv-1 rra} example, it is above $p = 0.7$.  This means that there is a weighting of the \textsc{hiv-1 rra} structures where the total variation distance between the correct and predicted distributions is at least $0.2$.

Next, we see that the range of values where both structures appear in Figure \ref{fig:HIV} is small: for Rsample, one of the two structures was barely present in all proportions $p \leq 0.3$.  Thus, we define the \emph{crossover window} for structures $A$ and $B$ to be the range of $p$ values for which both structures appear with probability at least $0.1$.  We would expect the crossover window to be of width approximately $0.8$ (corresponding to the range $0.1 \leq p \leq 0.9$), but it is less than $0.5$ in this example.  Below, we find that a short crossover window leads to high total variation distances.  One advantage of using crossover windows to bound total variation distances is that the ratio of probabilities of structures $A$ and $B$ is independent of the partition function in the Boltzmann distribution.

We will see that when $C$ is relatively small, this can lead to uncentered crossover points, because the data pseudoenergies are not strong enough to overcome the energy differences between competing structures.  On the other hand, when $C$ is large, a small change in the data can lead to large changes in energy, which will result in short crossover windows.  Now, we find two simple results about crossover points and crossover windows under the Nussinov-Jacobson model:

\begin{lem} \label{NJCrossoverPoint}
Consider an \textsc{rna} sequence with two structures $A$ and $B$ that have corresponding binary auxiliary data sequences $\{S_i\}$ and $\{T_i\}$.  In the Nussinov-Jacobson \textsc{shape}-directed energy model, the crossover point for structures $A$ and $B$ is given by
\[
p^* = \frac{1}{2} + \frac{1}{2C} - \frac{\bp(A - B)}{C\bp(A \Delta B)}.
\]
\end{lem}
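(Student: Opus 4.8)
The plan is to translate the definition of the crossover point into a single scalar equation and solve it. Since the Boltzmann probabilities in \eqref{BoltzmannProbability} for the two structures share the same partition function $Z$, the crossover condition ``$A$ and $B$ appear with equal probability'' is equivalent to $\ENJ(A \mid M(p)) = \ENJ(B \mid M(p))$. So the content of the lemma is to evaluate both sides of \eqref{Nussinov} at $M = M(p)$ and solve for $p$; an incidental benefit, noted in the text, is that the partition function never enters.

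First I would record that in the noiseless binary model the indicator vector $x$ of \eqref{Nussinov} is just $\{S_i\}$ for structure $A$ and $\{T_i\}$ for structure $B$. Then I would partition the $n$ positions by the value of the pair $(S_i, T_i) \in \{0,1\}^2$. At an ``agreement'' position ($S_i = T_i$) we have $M_i(p) = pS_i + (1-p)T_i = S_i = T_i$, so the summand $C|x_i - M_i(p)|$ vanishes for both structures. At a ``disagreement'' position ($S_i \ne T_i$) a one-line check shows $M_i(p)$ equals $p$ or $1-p$, and in either case $|S_i - M_i(p)| = 1-p$ while $|T_i - M_i(p)| = p$. Since there are exactly $\bp(A\Delta B)$ disagreement positions, summing gives
\[
\sum_{i=1}^n C\,|S_i - M_i(p)| = C\,\bp(A\Delta B)(1-p), \qquad \sum_{i=1}^n C\,|T_i - M_i(p)| = C\,\bp(A\Delta B)\,p.
\]

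Substituting into $\ENJ(A\mid M(p)) = \ENJ(B \mid M(p))$ yields the linear equation $-\bp(A) + C\,\bp(A\Delta B)(1-p) = -\bp(B) + C\,\bp(A\Delta B)\,p$, whose solution is $p = \tfrac12 + \tfrac{\bp(B)-\bp(A)}{2C\,\bp(A\Delta B)}$. To put this in the stated form I would use the bookkeeping identity $\bp(B) - \bp(A) = \bp(B-A) - \bp(A-B) = \bp(A\Delta B) - 2\,\bp(A-B)$ (both $\bp(A)$ and $\bp(B)$ equal the count of positions paired in both structures plus $\bp(A-B)$, respectively $\bp(B-A)$), and dividing through gives $p^* = \tfrac12 + \tfrac1{2C} - \tfrac{\bp(A-B)}{C\,\bp(A\Delta B)}$, as claimed.

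There is no real obstacle here; once the four-way case split is made the computation is elementary, and the only things worth flagging are the small observation that both types of disagreement position contribute $(1-p,\, p)$ symmetrically to the two pseudoenergies, together with the $\bp(A) - \bp(B)$ versus $\bp(A-B)$ identity. I would also remark that the formula describes a genuine crossover only when $p^* \in [0,1]$, which holds precisely in the ``$C$ not too small'' regime discussed after the lemma; when $C$ is too small the formula places $p^*$ outside the unit interval, which is exactly the symptom of one structure dominating for every mixing ratio.
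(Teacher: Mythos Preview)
Your proof is correct and follows essentially the same route as the paper: both compute the pseudoenergy contributions at the disagreement positions (getting $C(1-p)\,\bp(A\Delta B)$ for $A$ and $Cp\,\bp(A\Delta B)$ for $B$), solve the linear equation to obtain $p^*=\tfrac12+\tfrac{\bp(B)-\bp(A)}{2C\,\bp(A\Delta B)}$, and then apply the identity $\bp(B)-\bp(A)=\bp(B-A)-\bp(A-B)$ to reach the stated form. Your four-way case split on $(S_i,T_i)$ is a slightly more explicit bookkeeping of what the paper does in two sentences, but the substance is identical.
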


\begin{proof}
 Each nucleotide counted in $\bp(A - B)$ contributes $C(1 - p)$ to the energy of $A$ because the nucleotide is paired in $A$ (so $x_i = 0$), and the auxiliary data value at this nucleotide is $(1 - p)$, because $B$ has an unpaired nucleotide here.  Similarly, each nucleotide counted in $\bp(B - A)$ also contributes $C(1 - p)$ because $x_i = 1$ and the auxiliary data value is $p$.  Thus,
\[
\ENJ(A | M(p)) = -\bp(A) + \bp(A - B)C(1 - p) + \bp(B - A)C(1 - p).
\]
A similar equation holds for $B$, and the crossover point $p^*$ satisfies the equation where the energies for $A$ and $B$ are equal.  This gives:
\[
p^* = \frac{1}{2} + \frac{\bp(B) - \bp(A)}{2C\big[\bp(A - B) + \bp(B - A)\big]}.
\]
The set-theoretic relation $\bp(A) + \bp(B - A) - \bp(A - B) = \bp(B)$ can be used to simplify the equation for $p^*$ to the one given in the lemma.
\end{proof}

\begin{lem} \label{NJCrossoverWindow}
Consider an \textsc{rna} sequence with two structures $A$ and $B$ that have corresponding binary auxiliary data sequences $\{S_i\}$ and $\{T_i\}$.  In the Nussinov-Jacobson \textsc{shape}-directed energy model, the crossover window for structures $A$ and $B$ has length at most 
\[
\frac{RT \ln 9}{C\bp(A \Delta B)}.
\]
\end{lem}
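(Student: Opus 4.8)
The plan is to express the ratio of the Boltzmann probabilities of $A$ and $B$ as the exponential of an affine function of $p$ and then control its slope. First I would reuse the energy computation from the proof of Lemma~\ref{NJCrossoverPoint}. Writing $\bp(A \Delta B) = \bp(A - B) + \bp(B - A)$, the same case analysis gives
\[
\ENJ(A \mid M(p)) = -\bp(A) + C(1-p)\,\bp(A \Delta B), \qquad \ENJ(B \mid M(p)) = -\bp(B) + Cp\,\bp(A \Delta B),
\]
so the energy difference $\ENJ(A \mid M(p)) - \ENJ(B \mid M(p))$ is affine in $p$ with slope $-2C\,\bp(A \Delta B)$.

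Next, since the ratio $\mathbb{P}(A)/\mathbb{P}(B)$ in the Boltzmann distribution \eqref{BoltzmannProbability} does not depend on the partition function $Z$, I would set
\[
f(p) := \ln\frac{\mathbb{P}(A)}{\mathbb{P}(B)} = -\frac{1}{RT}\bigl(\ENJ(A \mid M(p)) - \ENJ(B \mid M(p))\bigr),
\]
which is affine in $p$ with $|f'(p)| = 2C\,\bp(A \Delta B)/(RT)$. The key observation is then that every $p$ in the crossover window satisfies both $\mathbb{P}(A) \ge 0.1$ and $\mathbb{P}(B) \ge 0.1$; because all Boltzmann probabilities are nonnegative and sum to $1$, this forces $\mathbb{P}(A) \le 0.9$ and $\mathbb{P}(B) \le 0.9$ as well, hence $\mathbb{P}(A)/\mathbb{P}(B) \in [1/9, 9]$ and $|f(p)| \le \ln 9$. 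Thus the crossover window is contained in $\{p : |f(p)| \le \ln 9\}$, and an affine function of slope magnitude $m > 0$ can stay inside an interval of length $2\ln 9$ only over a set of inputs of length at most $2\ln 9/m$; taking $m = 2C\,\bp(A \Delta B)/(RT)$ yields the claimed bound $RT \ln 9 / (C\,\bp(A \Delta B))$.

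The step I expect to need the most care is the passage from ``both modes have probability at least $0.1$'' to ``the probability ratio lies in $[1/9, 9]$'', which is where the presence of all the other structures in the ensemble is handled: only an upper bound on the window is needed, and the device (already flagged before the lemma) of working with $\mathbb{P}(A)/\mathbb{P}(B)$ rather than with either probability individually makes $Z$ drop out cleanly. A minor degenerate case, $\bp(A \Delta B) = 0$ — in which $A$ and $B$ produce identical binary data and the stated fraction is undefined — should simply be excluded at the outset.
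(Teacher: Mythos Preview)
Your proposal is correct and follows essentially the same route as the paper: both pass from the crossover-window condition to the necessary ratio condition $\mathbb{P}(A)/\mathbb{P}(B)\in[1/9,9]$ (thereby eliminating $Z$), take logarithms to obtain an affine function of $p$ with slope $2C\,\bp(A\Delta B)/(RT)$, and read off the bound from the length of the target interval divided by that slope. Your explicit justification that $\mathbb{P}(A),\mathbb{P}(B)\le 0.9$ via $\sum_S \mathbb{P}(S)=1$, and your remark on the degenerate case $\bp(A\Delta B)=0$, are small clarifications the paper leaves implicit, but the argument is otherwise the same.
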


\begin{proof}
In order to have at least 10\% of each structure appearing in the \textsc{shape}-directed distribution, a necessary (but insufficient) pair of conditions is:
\begin{align} 
\mathbb{P}(A | M(p) ) \geq 0.1 \geq \frac{1}{9} \mathbb{P}(B | M(p)), \nonumber \\
\mathbb{P}(B | M(p)) \geq \frac{1}{9} \mathbb{P}(A | M(p)). \label{ProbabilityCondition}
\end{align}
The advantage of these conditions is that by including probabilities on both sides of the inequality, the partition function $Z$ can be cancelled from the inequality entirely.  Inserting the expressions for $\ENJ(A | M(p))$ and $\ENJ(B | M(p))$ from the proof of Theorem \ref{NJCrossoverPoint} into Equation \eqref{ProbabilityCondition} and taking natural logarithms yields the equivalent set of inequalities,
\begin{equation*} \label{ProbIneq}
\bp(B) - \bp(A) - RT\ln 9 \leq C(2p - 1) \bp(A \Delta B) \leq \bp(B) - \bp(A) + RT\ln 9.
\end{equation*}
In order to find an upper bound for the length of the crossover window, we must find how slowly the expression $C(2p - 1) \bp(A \Delta B)$ can increase from the lower limit to the upper limit, as a function of $p$.  Recognizing that the difference between the lower and upper limits is $2RT \ln 9$, and letting $\ell$ be the length of the crossover window, this is equivalent to:
\[
2C\bp(A \Delta B) \ell \leq 2RT\ln 9,
\]
which can be rearranged to finish the proof. 
\end{proof}

With these facts about the crossover point and crossover window, we are ready to show that the Nussinov-Jacobson model fails to reconstruct conformational ratios for most randomly chosen pairs of structures.

\begin{proof}[Proof of Theorem \ref{ProbabilisticLimit}]
We will break into cases, depending on whether the parameter $C \leq C^*_n$ or $C > C^*_n$ for a cutoff value $C^*_n$ that depends on $n$, described further later.  If $C$ is large, then small changes in experimental data lead to big changes in predicted conformational ratios, causing the crossover window to be narrow.  On the other hand, if $C$ is small, the data is not weighted heavily, causing uncentered crossover points.  As we will see, both of these conditions will lead to large maximum total variation distances.

Let $C_q$ be any positive constant less than $2q(1-q)$.  Then, based on an approximation of the binomial distribution with a normal distribution (used below), we choose the cutoff value $C_n^* = \frac{RT \ln 9}{0.3C_q n}$.

Fix any small $\epsilon > 0$.  We will show that for $n$ sufficiently large, the probability of two randomly chosen structures having a conformational ratio where the total variation distance is greater than $0.25$ is at least $1 - \epsilon$.\\
\hrule\ \\
Case 1: $C \leq C_n^*$.  In this case, we will argue that the crossover point is uncentered with high probability, which will lead to a large total variation distance with high probability.  In particular, if the crossover point $p^* \geq 0.75$ for structures $A$ and $B$, then the total variation distance must be at least $0.25$ at the weighting $p = p^*$.  By Lemma \ref{NJCrossoverPoint}, $p^* \geq 0.75$ is equivalent to:
\[
\bp(A - B) \leq \left(\frac{1}{2C} - \frac{1}{4} \right) C \bp(A \Delta B).
\]
Then, let $i$ represent the number of nucleotides where the two structures are both paired or both unpaired, so that $\bp(A \Delta B) = n - i$.  If we condition on knowing $i$ in advance, then we can see that $p^* \geq 0.75$ is equivalent to:
\[
\bp(A - B) \leq \left(\frac{1}{2} - \frac{C}{4}\right) (n - i).
\]
Note that for all $0 \leq i \leq n$, the right hand side is nonpositive and nonincreasing in $C$.  Thus, this inequality is most restrictive on $\bp(A - B)$ when $C$ is the largest possible value, which we have assumed is when $C = C_n^*$.   Therefore,
\[
\mathbb{P}(p^* \geq 0.75) \geq \sum_{i = 0}^{n} \mathbb{P}(\bp(A \Delta B) = n - i) \cdot \mathbb{P}\left(\bp(A - B) \leq \left( \frac{1}{2} - \frac{C_n^*}{4}\right)(n - i)\right).
\]
If we let $j = \bp(A - B)$, we can use our model where a nucleotide is paired or unpaired independently with probability $q$ to rewrite the sum:
\begin{equation} \label{eq:CPProbBound}
\mathbb{P}(p^* \geq 0.75) = \sum_{i = 0}^{n} \binom{n}{i} (q^2 + (1 - q)^2)^i (q(1 - q))^{n - i} \sum_{j = 0}^{\left\lfloor \left(\frac{1}{2} - \frac{C_n^*}{4}\right)(n - i)\right\rfloor}  \binom{n - i}{j}.
\end{equation}
We will show that the inner sum is at least $2^{n - i - 1} \left(1 - \frac{\epsilon}{2}\right)$ for all $i \leq n$, for $n$ sufficiently large.  Applying this inequality will allow us to approximate the remaining sum using a normal distribution.  In order to show this, we will interpret the inner sum as a scaled cumulative distribution function for a binomial distribution.  Let $k = n - i$.  We have:
\begin{equation} \label{FloorEquation}
\sum_{j = 0}^{\left\lfloor \left(\frac{1}{2} - \frac{C_n^*}{4}\right)(n - i)\right\rfloor}  \binom{n - i}{j} = 2^{k} \sum_{j = 0}^{\left\lfloor \left(\frac{1}{2} - \frac{C_n^*}{4}\right)k\right\rfloor}  \binom{k}{j} \left(\frac{1}{2}\right)^j \left(\frac{1}{2}\right)^{k - j}
\end{equation}
Now, let $\{X_m\}$ be a sequence of independent and identically distributed Bernoulli random variables with parameter $1/2$.  Then,
\begin{align*}
\sum_{j = 0}^{\left\lfloor \left(\frac{1}{2} - \frac{C_n^*}{4}\right)k\right\rfloor}  \binom{k}{j} \left(\frac{1}{2}\right)^j \left(\frac{1}{2}\right)^{k - j} &= \mathbb{P}\left(X_1 + \ldots + X_k \leq \left(\frac{1}{2} - \frac{C_n^*}{4}\right)k \right)\\
&= \mathbb{P} \left( \frac{X_1 + \cdots + X_k - k/2}{\sqrt{k}/2} \leq \frac{-C_n^*\sqrt{k}}{2}\right)
\end{align*}
Here, the Central Limit Theorem implies that this renormalized sum of Bernoulli random variables approaches a standard normal distribution (where we used that the mean and standard deviation of each $X_m$ are both $1/2$).  Also, because the Bernoulli distribution has moments of all orders, we can apply the Berry-Esseen theorem to get a bound on how quickly this probability approaches the approximation given by the normal distribution (see, for example, Theorem 3.4.17 in \cite{Durrett:2019}).  Let $B$ be the constant from the Berry-Esseen theorem that depends only on the parameter of the Bernoulli random variables, $1/2$, and let $\Phi$ be the cumulative distribution function for a normal random variable with mean $0$ and standard deviation $1$.  Then, we have:
\[
\mathbb{P} \left( \frac{X_1 + \cdots + X_k - k/2}{\sqrt{k}/2} \leq  \frac{-C_n^*\sqrt{k}}{2} \right) \geq \Phi\left(  \frac{-C_n^*\sqrt{k}}{2} \right) - \frac{B}{\sqrt{k}}
\]
Since $k  = n - i \leq n$, and since $C_n^* = \frac{RT \ln 9}{0.3C_q n} = O\left(n^{-1}\right)$ as $n$ approaches infinity, we see that the input to $\Phi$ approaches $0$ as $n$ grows large.  Additionally, if we assume $k \geq C_\epsilon$ for some large constant $C_\epsilon$ that depends on $\epsilon$ but is independent of $n$, then we can assume:
\[
\Phi\left(  \frac{-C_n^*\sqrt{k}}{2} \right) - \frac{B}{\sqrt{k}} \geq \Phi(0) \left(1 - \frac{\epsilon}{2}\right) = \frac{1}{2}\left(1 - \frac{\epsilon}{2}\right)
\]
Tracing back to Equation \eqref{FloorEquation}, we have shown for $n$ sufficiently large and for $k > C_\epsilon$,
\begin{equation}\label{eq:ApproxBinom}
\sum_{j = 0}^{\left\lfloor \left(\frac{1}{2} - \frac{C_n^*}{4}\right)k\right\rfloor}  \binom{k}{j} \geq 2^{k - 1} \left(1 - \frac{\epsilon}{2}\right).
\end{equation}
Recall that $k = n - i$, so that $k > C_\epsilon$ is equivalent to $i < n - C_\epsilon$.  Thus, substituting Equation \eqref{eq:ApproxBinom} into Equation \eqref{eq:CPProbBound} when $i < n - C_\epsilon$, and dropping the terms where $i \geq n - C_\epsilon$, gives us:
\begin{align*}
\mathbb{P}(p^* \geq 0.75) &\geq \sum_{i = 0}^{\lfloor n - C_\epsilon - 1\rfloor} \binom{n}{i} (q^2 + (1 - q)^2)^i (q(1 - q))^{n - i} 2^{n - i - 1}\left(1 - \frac{\epsilon}{2}\right)\\
&= \frac{1}{2}\left(1 - \frac{\epsilon}{2}\right) \sum_{i = 0}^{\lfloor n - C_\epsilon - 1\rfloor} \binom{n}{i} (q^2 + (1 - q)^2)^i (2q(1 - q))^{n - i}\\
&= \frac{1}{2}\left(1 - \frac{\epsilon}{2}\right) \left(1 - \sum_{i = \lfloor n - C_\epsilon\rfloor}^{n} \binom{n}{i} (q^2 + (1 - q)^2)^i (2q(1 - q))^{n - i}\right),
\end{align*}
where the last line is true because the summation again represents a binomial distribution, this time with parameter $q^2 + (1 - q)^2$.  Because $C_\epsilon$ is independent of $n$, the last sum approaches $0$ as $n$ becomes large.  For this reason, for $n$ sufficiently large,
\[
\mathbb{P}(p^* \geq 0.75) \geq \frac{1}{2} \left(1 - \epsilon\right).
\]
Finally, doubling through symmetry, we see that if $C \leq C_n^*$, then for all $n$ sufficiently large,
\[
\mathbb{P}(p^* \not \in (0.25, 0.75)) \geq 1 - \epsilon,
\]
proving in this case, $P_n \geq 1 - \epsilon$.\\
\hrule\ \\
Case 2: Now, assume that $C \geq C_n^*$.  Our goal will be to show that the crossover window for any two random structures has width at most $0.3$ with probability at least $1 - \epsilon$.  Note that when the crossover window has width at most $0.3$, then this forces the maximum total variation distance to be at least $0.25$:  in the best-case scenario, the crossover window would be centered around $p = 0.5$, in which case the total variation distance at both $p = 0.35$ and $p = 0.65$ is at least $0.25$.

Now, let $\ell$ be the width of the crossover window for randomly selected structures $A$ and $B$.  Then, we know from Lemma \ref{NJCrossoverWindow} that
\[
\ell \leq \frac{RT \ln 9}{C \bp(A \Delta B)},
\]
and so we will set
\[
\frac{RT \ln 9}{C \bp(A \Delta B)} \leq 0.3.
\]
This is equivalent to
\[
\bp(A \Delta B) \geq \frac{RT \ln 9}{C \cdot 0.3}.
\]
Using that $C \geq C_n^* = \frac{RT \ln 9}{0.3C_q n}$, we have:
\begin{align*}
\mathbb{P}(\ell \leq 0.3) &\geq \mathbb{P}\left(\bp(A \Delta B) \geq \frac{RT \ln 9}{C \cdot 0.3}\right)\\
&\geq \mathbb{P}\left(\bp(A \Delta B) \geq C_q n\right)\\
&= 1 - \mathbb{P}\left(\bp(A \Delta B) < C_q n\right)\\
&= 1 - \sum_{i = 0}^{\left\lfloor C_q n\right\rfloor} \binom{n}{i} \left(2q(1 - q)\right)^i \left(q^2 + (1 - q)^2\right)^{n - i}\\
\end{align*}
Now, we repeat the procedure from Case 1 to analyze the remaining sum: let $\{Y_m\}$ be a sequence of independent Bernoulli random variables with parameter $2q(1-q)$, and standard deviation $\sigma_q$.  Then,
\begin{multline*}
\sum_{i = 0}^{\left\lfloor C_q n \right\rfloor} \binom{n}{i} \left(2q(1 - q)\right)^i \left(q^2 + (1 - q)^2\right)^{n - i} = \mathbb{P}\left(Y_1 + \cdots + Y_n \leq  C_q n \right)\\
= \mathbb{P} \left(\frac{Y_1 + \cdots + Y_n - n\big(2q(1-q)\big)}{\sigma_q \sqrt{n}} \leq \frac{C_q n - n\big(2q(1-q)\big)}{\sigma_q \sqrt{n}}\right)\\
\leq \Phi \left(\frac{ \sqrt{n} \big(C_q - 2q(1-q)\big)}{\sigma_q}\right) + \frac{B_q}{\sqrt{n}},
\end{multline*}
where the last line is again due to the Berry-Esseen theorem, and the constant $B_q$ is a potentially larger constant than $B$ in Case 1 that depends only on the parameter of the Bernoulli random variables, $\{Y_m\}$.  Here, by our choice of $C_q < 2q(1-q)$, we see that the input to $\Phi$ approaches negative infinity as $n$ approaches infinity.  Additionally, the second term is arbitrarily small as $n$ increases.  Thus, we can conclude that for $n$ sufficiently large,
\[
\sum_{i = 0}^{\left\lfloor C_q n\right\rfloor} \binom{n}{i} \left(2q(1 - q)\right)^i \left(q^2 + (1 - q)^2\right)^{n - i} < \epsilon,
\]
from which we can conclude that for all $C \geq C_n^*$, for $n$ sufficiently large,
\[
\mathbb{P}(\ell \leq 0.3) \geq 1 - \epsilon.
\]
Again, this implies that when $C \geq C_n^*$, $P_n \geq 1 - \epsilon$.  Thus, between Case 1 and Case 2, we found sufficient conditions for small and large $C$ where the total variation distance was above 0.25 with high probability, which completes the proof.

\end{proof}

Theorem \ref{ProbabilisticLimit} tells us that the Nussinov model is unsuccessful at recreating conformational ratios for long \textsc{rna} sequences.  We can also use the same proof structure to find lower bounds on $P_n$ for small $n$.   The challenging part of such an analysis is picking the optimal cutoff $C_n^*$ between the two cases.  As the cutoff increases, the probability in Case 1 increases while the probability in Case 2 decreases.  However, the probabilities in Case 2 are a step function of the cutoff, indicating that there are only finitely many cutoff values to consider for each value of $n$ to optimize the bound.  Figure \ref{fig:Nussinov} shows a lower bound for $P_n$ for $n$ from $1$ to $300$ when $q = 0.6$ (corresponding to approximately the percentage of pairings in \textsc{rna} structures).  By length $20$, over half of the pairs of structures have a conformational ratio that cannot be reconstructed accurately.

\begin{figure}
\begin{center}
\includegraphics[width=0.7\textwidth]{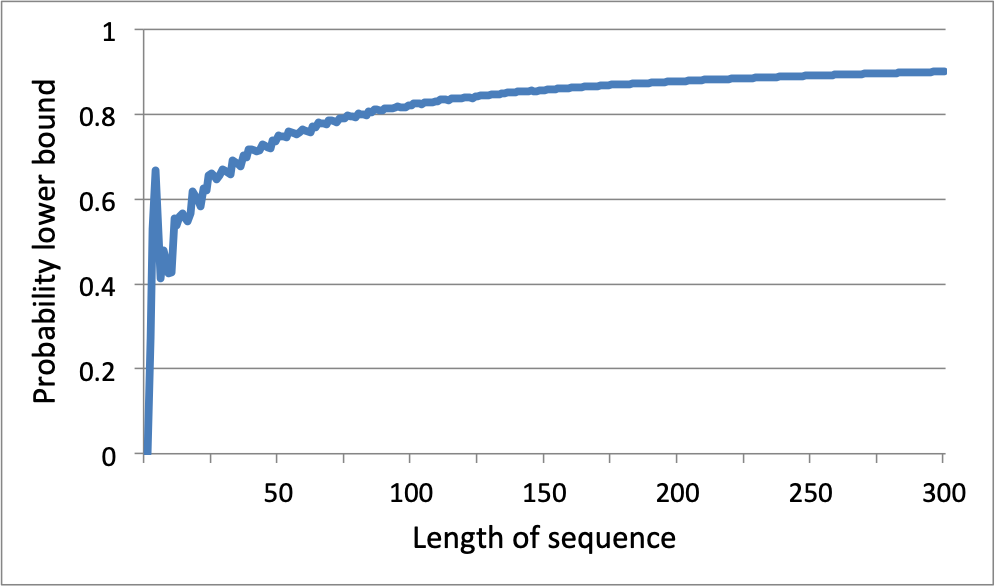}
\end{center}
\caption{The plot above gives a lower bound for the probability that a randomly selected pair of structures of length $n$ has a conformational ratio that cannot accurately be reconstructed by the Nussinov-Jacobson model.}
\label{fig:Nussinov}
\hrule 
\end{figure}

\section{Information content in auxiliary data} \label{Sec:Information}

We have found evidence that pseudoenergy-based prediction models often cannot reconstruct the weightings in a bimodal distribution of structures, even if the data is noiseless.  But, how much weighting information is contained in the data itself?  In this section, we will quantify this by using Fisher information.

In examining the information in the data, we separate the prediction of structures from the prediction of weightings.  This is motivated by the ``sample and select'' algorithm from \cite{Quarrier:2010}, where a sample of putative structures is generated from the undirected Boltzmann distribution, and then the structure which agrees most closely with the auxiliary data is selected.  For multimodal distributions, this method would need to be refined so that a set of putative structural modes is selected, and then the auxiliary data is used to predict the conformational ratios between these structures.

Here, we assume that the structures in a bimodal distribution are already known in advance.  This represents the best-case scenario, where the only unknown in a distribution of structures is the conformational ratio.  While this may sound like a strong assumption, signals from the component structures of a distribution often appear in the undirected Boltzmann distribution.  In particular, RNAprofiling, \cite{Rogers:2014}, revealed signals from both structures for the Bistable and \textsc{hiv rra} sequences.  Additionally, as we have seen in Section \ref{Sec:Examples} above, Rsample, \cite{Spasic:2018}, is often able to identify the correct structures within a distribution, regardless of whether the conformational ratio is correct.

Thus, given a sequence of auxiliary data corresponding to a mixture of two known structures $A$ and $B$, we aim to reliably predict the conformational ratio $p$ of structure $A$ in the distribution. We will show first through example, and then rigorously via Fisher information, that the ability to recover $p$ depends on the number of locations where structures $A$ and $B$ differ.  To the best of our knowledge, this is the first analysis of how the ability to predict $p$ depends on the structures in a distribution.


\subsection{Example} Consider the \textsc{hiv-1 rra} sequence from Section \ref{Sec:Examples}.  We use the same simulated sequences $\{T_i\}$ and $\{S_i\}$ for each of the two known \textsc{hiv-1 rra} structures.  Then, as in Section \ref{Sec:Examples}, we mix these sequences together with various proportions, $p$, to get a new \textsc{shape} sequence $M(p) =  \{pS_i + (1 - p) T_i\}$.

Now, with only the numerical values of the sequence $M(p)$ and the two \textsc{hiv-1 rra} structures, we check whether we can recover the value, $p$.  We use only the values from $M(p)$ corresponding to where structures $A$ and $B$ differ because the \textsc{shape} values come from different distributions at these points, and are expected to carry the most information about $p$.  Each of these values is drawn from a convolution of two different known distributions, with an unknown weighting.  A common way to predict the weighting is to use a maximum likelihood estimator, which finds the value $\hat p$ which maximizes the probability of producing the observed auxiliary data sequence.  In summary, we simulated sequences $\{T_i\}$ and $\{S_i\}$, mixed them in different ratios $p$, and used the maximum likelihood estimator $\hat p$ to estimate $p$ from $M(p)$.   The results are shown in Figure \ref{fig:MLE}.

Here, the estimator successfully recovered the $p$ values with a maximum error of $0.11$, which is an improvement over the pseudoenergy models in Figure \ref{fig:HIV}.  However, if we repeat the same procedure on the 16S r\textsc{rna} Four way Junction from the test set in \cite{Spasic:2018}, then the maximum error is $0.24$.   This can be explained by enumerating the structural differences between the modes: the two \textsc{hiv-1 rra} structures differ in 31 positions, while the two 16S r\textsc{rna} structures differ in only 16 positions.
\begin{figure}
\begin{center}
\includegraphics[width=0.9\textwidth]{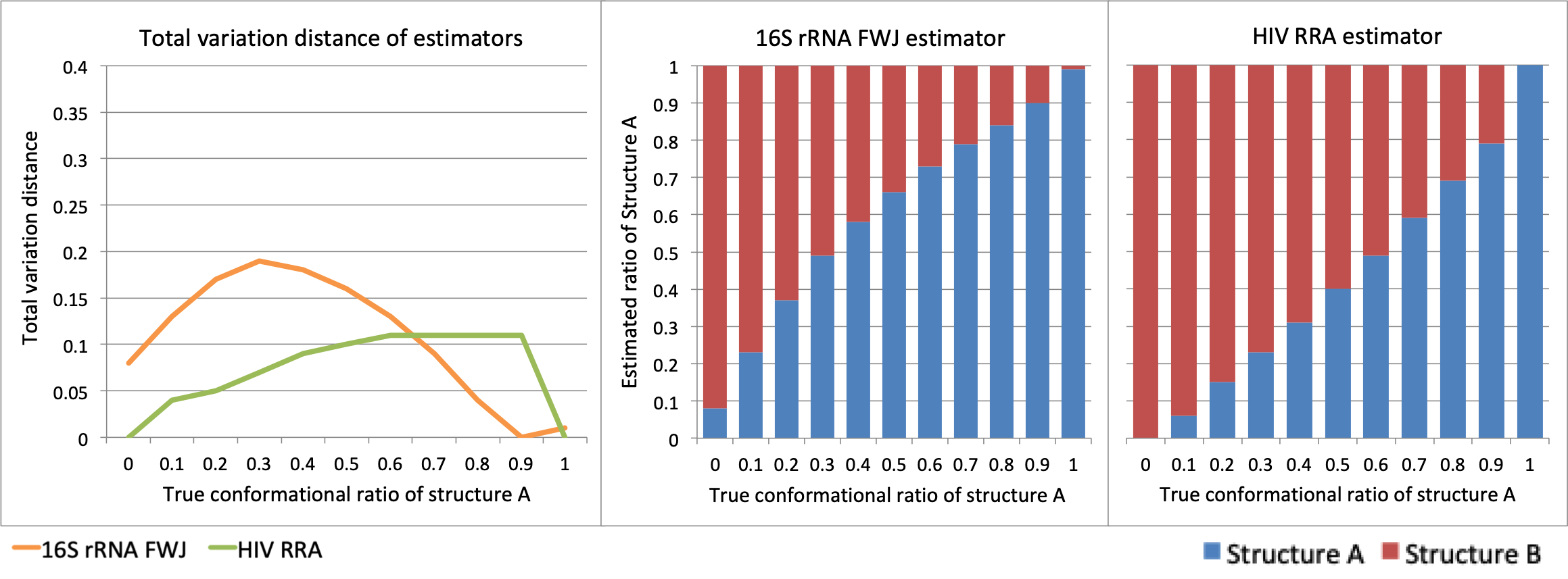}
\end{center}
\caption{The maximum error for the 16S r\textsc{rna} Four way Junction is higher than the maximum error for the \textsc{hiv rra} sequence.  This is because there are fewer structural differences between the 16S structures than the \textsc{hiv-1 rra} structures.}
\label{fig:MLE}
\hrule
\end{figure}

\subsection{Fisher information}  
To assess how accurate an estimator $\hat p$ can recover $p$, we study the variance of $\hat p$.  The more the structures in a distribution differ, the more information the auxiliary data is expected to carry about $p$, which should lead to a lower variance of $\hat p$.  
This means that the variance of $\hat p$ is not directly a function of the length of the \textsc{rna} sequences, but is instead a function of the number of locations where the structures differ.

To quantify the relationship between the variance of an estimator and the number of structural differences between modes, we use Fisher information.  The Fisher information of a random variable, defined below, describes how much information the random variable contains about a parameter.  By using Fisher information and the model of \textsc{shape} data from \cite{Sukosd:2013}, we will be able to conclude statements like the following:
\begin{lem}\label{FI}
Consider a sequence of $n$ simulated \textsc{shape} values derived from an equal mixture of two structures, where at each position, one structure is center-paired and the other structure is unpaired.  Let $\hat p$ be any unbiased estimator of the true conformational ratio, $p = 0.5$.  Then, $\Var(\hat p) \geq \frac{1}{2.15n}$.
\end{lem}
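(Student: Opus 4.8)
The plan is to obtain the inequality directly from the Cram\'er--Rao lower bound applied to the estimation model described in the statement. Write $f_c$ and $f_u$ for the known densities of center-paired and unpaired \textsc{shape} values from the empirical fits of \cite{Sukosd:2013}. At each of the $n$ positions the simulated value is $M_i = pX_i + (1-p)Y_i$ with $X_i \sim f_c$ and $Y_i \sim f_u$ drawn independently, so $M_i$ has the density $g_p$ obtained by convolving the law of $pX_i$ with the law of $(1-p)Y_i$:
\[
g_p(m) \;=\; \int \frac{1}{p}\, f_c\!\left(\frac{m-(1-p)y}{p}\right) f_u(y)\,\D y .
\]
The observations are independent and identically distributed given $p$, the true value $p = 1/2$ is interior to $(0,1)$, and the support of $g_p$ does not depend on $p$, so the Cram\'er--Rao bound yields, for every unbiased estimator $\hat p$,
\[
\Var(\hat p) \;\ge\; \frac{1}{n\, I(1/2)}, \qquad I(p) := \int \frac{\big(\partial_p g_p(m)\big)^2}{g_p(m)}\,\D m .
\]
Hence it suffices to prove the single-observation bound $I(1/2) \le 2.15$.

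First I would record the regularity conditions that justify the displayed formula for $I(p)$ and the Cram\'er--Rao inequality itself: the densities in \cite{Sukosd:2013} are smooth with rapidly decaying tails, so one may differentiate under the integral sign in $g_p$, the density $g_p$ is strictly positive on its support, and $(\partial_p g_p)^2/g_p$ is integrable, making $I(p)$ finite. Differentiating $g_p$ in $p$ produces terms in which $f_c$ and $f_c'$ are evaluated at the rescaled argument $(m-(1-p)y)/p$; specializing to $p = 1/2$ eliminates the rescaling and leaves expressions built from the ordinary convolution of $f_c$ and $f_u$, since $g_{1/2}(m) = 2\,(f_c \star f_u)(2m)$. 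The resulting one-dimensional integral for $I(1/2)$ has no closed form, so I would evaluate it numerically from the explicit parameterizations of $f_c$ and $f_u$; this gives a value safely below $2.15$, and substituting into the Cram\'er--Rao inequality completes the proof.

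The main obstacle is turning the numerical evaluation of $I(1/2)$ into a rigorous upper bound rather than a point estimate: one must control the tails of the integrand and its behavior where $g_p$ is small so that the numerical value provably bounds the information integral from above, while the differentiation-under-the-integral and positivity verifications, though routine, still need to be carried out for the specific Sukosd et al.\ fits. I would also note that the argument is not special to $p = 1/2$ or to the center-paired versus unpaired pairing pattern: replacing $I(1/2)$ by $\sup_p I(p)$, or by $I(p_0)$ for any interior true ratio $p_0$ and any other pair of pairing classes in the model, gives a bound of the form $\Var(\hat p) \ge c/n$ with an explicit constant $c$, which is exactly the claim of Conclusion 2 that the variance can decrease at most linearly in $n$.
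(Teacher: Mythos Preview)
Your approach is essentially the paper's: both apply the Cram\'er--Rao bound to the i.i.d.\ model, reduce the lemma to evaluating the single-observation Fisher information $I(1/2)$, express $g_p$ as a weighted convolution of the Suk\"osd et~al.\ densities, differentiate under the integral, and compute $I(1/2)$ numerically. Two small corrections: the support of $g_p$ \emph{does} depend on $p$ (the center-paired GEV is supported on $[-0.025,\infty)$ and the exponential on $[0,\infty)$, so the mixture is supported on $[-0.025(1-p),\infty)$); the paper handles this by invoking the Leibniz rule and noting that the boundary terms vanish, rather than by asserting parameter-independent support. Also, the paper reports $I(1/2)=2.15$ rather than a value ``safely below'' it, so the constant in the lemma is the computed value, not a rounded-up upper bound.
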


To prove Lemma \ref{FI}, consider a single \text{shape} value $X = pX_1 + (1 - p)X_2$ corresponding to a nucleotide that is unpaired with \textsc{shape} value $X_1$ in proportion $p$ of structures, and center-paired with \textsc{shape} value $X_2$ in proportion $1 - p$ of structures.  We choose to compare these distributions because they are the most different, and represent the scenario where $p$ should be recovered most easily.  We describe how to modify the approach to include edge-paired nucleotides below.  

The Fisher information is a measure of how much information a single random observation of $X$ gives about a fixed parameter $p$ on average.  Some values of $X$ give less information about $p$ than others.  For example, if $X$ is a middling value, it is unclear whether $X$ is the average of a paired and unpaired signal, or whether $X$ is a lower-than-usual unpaired signal.

Formally, let $g(x, p)$ be the probability density function of the random variable $X$ with parameter $p$.  For a fixed value of $x$ but varied values of $p$, if $g(x, p)$ has a sharp maximum, then a maximum likelihood estimator should be able to recover $p$ more easily.  This is quantified by the Fisher information:
\begin{equation} \label{eq:FisherInf}
I(p) := \mathbb{E} \left[ \left. \left(\frac{\partial}{\partial p} \log g(X, p)\right)^2 \right| p \right] = \int_{-\infty}^\infty \left(\frac{\partial}{\partial p} \log g(x, p)\right)^2 g(x, p) dx.
\end{equation}
The Cram\'{e}r-Rao bound illustrates the utility of Fisher information: for any unbiased estimator $\hat p$ of a parameter $p$,
\[
\Var \hat p \geq \frac{1}{I(p)}.
\]
Additionally, when collecting independent samples, the Fisher information is additive.  Therefore, if we have a sample of $n$ \textsc{shape} values corresponding to nucleotides that are unpaired in proportion $p$ of structures, and center-paired in proportion $1 - p$ of structures, then for any unbiased estimator $\hat p$ of $p$,
\[
\Var \hat p \geq \frac{1}{nI(p)}.
\]
Lemma \ref{FI} follows from calculating the Fisher information for $p = 0.5$.  Typically, the maximum likelihood estimator for a parameter is not unbiased, but under mild regularity conditions, it asymptotically approaches an unbiased estimator with minimal variance, given by $\frac{1}{nI(p)}$ as $n$ grows large (see, for example, Theorem 9.18 in \cite{Wasserman:2004}).  

The Fisher information could also be used to describe the variance of an estimator when the true value is not $p = 1/2$, by keeping track of more details about the structural differences in the modes.  If there are $k$ positions where structure $A$ is unpaired and structure $B$ is paired, and $\ell$ positions where where structure $A$ is center-paired and structure $B$ is unpaired, and if structure $A$ has conformational ratio $p$, then
\[
\Var \hat p \geq \frac{1}{kI(p) + \ell I(1 - p)}.
\]
This could be extended to include edge-paired nucleotides by considering more terms.

Now, we compute the Fisher information.  Since $g(x, p)$ is the density function for a weighted sum, it is the weighted convolution of the paired and unpaired \textsc{shape} distributions:
\[
g(x, p) = \int_0^{y + 0.025(1 - p)} \frac{1}{p(1 - p)} f_{\mbox{\scriptsize un}}\left(\frac{y}{p}\right) f_{\mbox{\scriptsize pair}}\left(\frac{x - y}{1 - p}\right) dy.
\]
Here, $f_{\mbox{\scriptsize un}}$ and $f_{\mbox{\scriptsize pair}}$ represent the unpaired and center-paired \textsc{shape} probability distribution functions from \cite{Sukosd:2013}, respectively.  $f_{\mbox{\scriptsize pair}}(x)$ is a generalized extreme value distribution with parameters $\xi = 0.762581, \sigma = 0.0492536,$ and $\mu = 0.0395857$.  $f_{\mbox{\scriptsize un}}(x)$ is an exponential distribution with parameter $\lambda = 1.46797$.  The domain of integration is the support of the integrand.

To evaluate the Fisher information, we note:
\[
\frac{\partial}{\partial p} \ln g(x, p) = \frac{1}{g(x, p)} \cdot \frac{\partial}{\partial p} g(x, p).
\]
Because  $f_{\mbox{\scriptsize un}}$ and $f_{\mbox{\scriptsize pair}}$ are continuous and have continuous derivatives over their support, and because the bounds of the convolution $g(x, p)$ are continuous with respect to $p$, we can use the Liebniz rule to evaluate the derivative of $g(x, p)$:
\begin{multline*}
\frac{\partial}{\partial p} g(x, p) = \frac{d}{dp} \left(\frac{1}{p(1 - p)} \right) \int_0^{y + 0.025(1 - p)} f_{\mbox{\scriptsize un}}\left(\frac{y}{p}\right) f_{\mbox{\scriptsize pair}}\left(\frac{x - y}{1 - p}\right) dy \\
+ \frac{1}{p(1 - p)}  \int_0^{y + 0.025(1 - p)}  \frac{\partial}{\partial p} \left( f_{\mbox{\scriptsize un}}\left(\frac{y}{p}\right) f_{\mbox{\scriptsize pair}}\left(\frac{x - y}{1 - p}\right)\right) dy
\end{multline*}

In its general form, the Liebniz rule also involves terms where the integrand is evaluated at the bounds of the integral, but in this case, these terms are zero.  The second term above can be split into two integrals after applying the product rule, allowing us to write $\frac{\partial}{\partial p} g(x, p)$ as the sum of three integrals.  Plugging this expression into \eqref{eq:FisherInf} and using an iterated numerical integration in Matlab, we evaluated the Fisher information, and the results are shown in Figure \ref{fig:FI}.
\begin{figure}
\begin{center}
\includegraphics[width=0.7\textwidth]{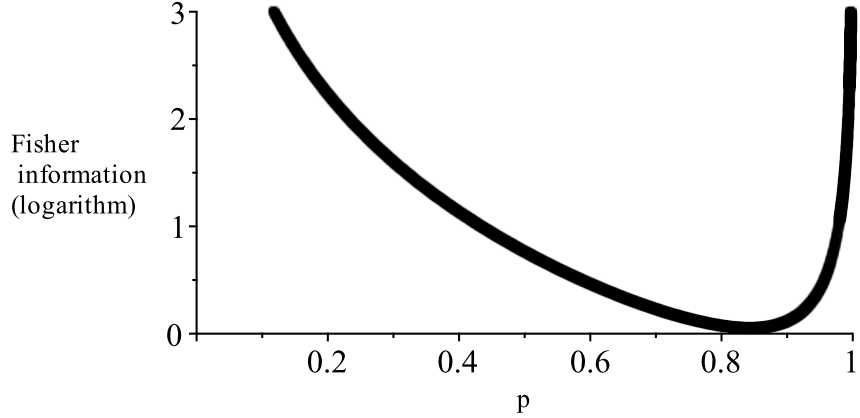}
\end{center}
\caption{For each value of $p$, the Fisher information $I(p)$ was calculated by using numerical integration in Matlab.  The vertical axis is on a logarithmic scale.}
\label{fig:FI}
\hrule
\end{figure}

$I(0.5) = 2.15$, yielding the bound in Lemma \ref{FI}.  This can be leveraged to determine how many structural differences two structural modes need to have in order to accurately recover $p$ values.  If we would like the standard deviation of $\hat p$ to be less than $0.1$, for example, the number of differences $n \geq 47$.  In other words, it is only possible to recover $p$ with this level of certainty if our two structures have at least $47$ positions where they differ.

\section{Conclusion}

Recent advances to pseudoenergy models such as Rsample, \cite{Spasic:2018}, are able to identify multiple structural modes within a distribution.  However, it is still challenging to reliably reconstruct the conformational ratios of the modes.  One issue is that long \textsc{rna} sequences tend to have very different structures with similar energies.  When this happens, a short crossover window leads pseudoenergy models to inaccurate predictions.  An alternative approach is to separate predictions of structural modes from their conformational ratios.  When the structural modes are identified in advance, more accurate conformational ratios can be found by using estimators from statistics.  

The variance of the estimator depends on how many differences there are between the structural modes.  In practice, when predicting an unknown conformational weighting between structural modes, the more the modes differ, the higher the confidence in the prediction.  Fisher information can be used to give an estimated confidence level when using a maximum likelihood estimator to recover the conformational ratio.

%
%
%

%
%

\section{Acknowledgements}
This work was supported by funds from the National Institutes of Health (R01GM126554 to CH) and the National Science Foundation (DMS1344199 to CH).

\newpage
\bibliographystyle{unsrt}

\bibliography{DeconvolvingBib.bib}

\end{document}